\newtheorem{definition}{Definition}[section]
\newtheorem{corollary}[definition]{Corollary}
\newtheorem{theorem}[definition]{Theorem}
\newtheorem{lemma}[definition]{Lemma}
\newtheorem{remark}[definition]{Remark}
\newtheorem{example}[definition]{Example}
\date{}
\begin{document}
\baselineskip 18pt
\bibliographystyle{plain}
%\title[Convergence analysis of reshuffling Kaczmarz method]
%{Convergence analysis of reshuffling Kaczmarz method: Convergence rate and acceleration}

\title[A simple linear convergence analysis of  RRK]%{Convergence analysis of the reshuffling Kaczmarz method}
{A simple linear convergence analysis of the randomized reshuffling Kaczmarz method}

\author{Deren Han}
\address{LMIB of the Ministry of Education, School of Mathematical Sciences, Beihang University, Beijing, 100191, China. }
\email{handr@buaa.edu.cn}

\author{Jiaxin Xie}
\address{LMIB of the Ministry of Education, School of Mathematical Sciences, Beihang University, Beijing, 100191, China. }
\email{xiejx@buaa.edu.cn}

\begin{abstract}
The random reshuffling Kaczmarz (RRK) method enjoys the simplicity and efficiency in solving linear systems as a Kaczmarz-type method, whereas it also inherits the practical improvements of the stochastic gradient descent (SGD) with random reshuffling (RR)  over original SGD. However, the current studies on RRK do not  characterize its convergence comprehensively. In this paper, we present a novel analysis of the RRK method and prove its linear convergence towards the unique least-norm solution of the linear system. Furthermore, the convergence upper bound is tight and does not depend on the dimension of the coefficient matrix.
\end{abstract}

\maketitle

\let\thefootnote\relax\footnotetext{Key words: linear systems, random reshuffling, Kaczmarz, least-norm solution,  convergence rate}

\let\thefootnote\relax\footnotetext{Mathematics subject classification (2020): 65F10, 65F20, 90C25, 15A06, 68W20}

%\let\thefootnote\relax\footnotetext{Corresponding author: Jiaxin Xie, LMIB of the Ministry of Education, School of Mathematical Sciences, Beihang University, Beijing, 100191, China. E-mail: xiejx@buaa.edu.cn}
% ---------------------- section introduction ----------------------
\section{Introduction}

Solving systems of linear equations
\begin{equation}
	\label{main-prob}
	Ax = b, \qquad \text{where} \quad A \in \mathbb{R}^{m \times n}, \quad b \in \mathbb{R}^m,
\end{equation}
arises as a foundational problem in many fields of scientific computing and engineering, playing a critical role in optimal control \cite{Pat17}, signal processing \cite{Byr04}, machine learning \cite{Cha08}, and partial differential equations \cite{Ols14}.
Among the various methods for solving linear systems, the Kaczmarz method, which is also referred to as the algebraic reconstruction technique (ART), is renowned for its high efficiency and ease of implementation. 

The Kaczmarz method operates by selecting a row of the matrix in each iteration, and projecting the current solution estimate onto the hyperplane defined by that row, thereby refining the approximation iteratively. Empirical evidence in the literature suggests that selecting the rows of the matrix \(A\) in a random order, rather than a deterministic one, typically accelerates the convergence of the Kaczmarz method \cite{herman1993algebraic, natterer2001mathematics, feichtinger1992new}. Building on this idea,  Strohmer and Vershynin studied the randomized Kaczmarz (RK) method for consistent linear systems and proved its linear convergence in expectation in their seminal work \cite{Str09}. This breakthrough has inspired a large amount of research on the development of Kaczmarz-type methods, including accelerated RK methods \cite{liu2016accelerated, han2022pseudoinverse, loizou2020momentum, zeng2024adaptive}, randomized block Kaczmarz methods \cite{Nec19, needell2014paved, moorman2021randomized, Gow15, xie2024randomized}, randomized Douglas--Rachford methods \cite{han2024randomized}, greedy RK methods \cite{Bai18Gre, Gow19, su2023linear}, and randomized sparse Kaczmarz methods \cite{schopfer2019linear, chen2021regularized, zeng2023fast}, etc.

As a randomized approach, the RK method shares similar advantages with stochastic gradient descent (SGD) in addressing large-scale problems\cite{zeng2024adaptive,loizou2020momentum,garrigos2023handbook,gower2019sgd}. In fact, the RK method can be regarded as a variant of the stochastic gradient descent (SGD) method  \cite{robbins1951stochastic,needell2014stochasticMP,Str09} applied to the least-squares problem. See Section \ref{section-rrk} for more details. While a typical SGD iteration employs sampling without replacement to select a random gradient, a particularly effective variant uses sampling without replacement, also known as random reshuffling (RR)\cite{ahn2020sgd,mishchenko2020random,ying2018stochastic,nguyen2021unified}.
This sampling scheme introduces statistical dependence and eliminates the unbiased gradient estimation property inherent in SGD, which consequently complicates its theoretical analysis. Despite these challenges, RR has been empirically demonstrated to outperform original SGD in numerous practical applications \cite{sun2020optimization,gurbuzbalaban2021random,bottou2009curiously,recht2013parallel}, which is partly due to the simplicity and efficiency of implementing the random reshuffling sampling scheme, and the fact that RR utilizes all samples within each epoch. % {\color{blue} in each iteration SHOULD WE EXTINCT ITERATION AND EPOCH?}.

Applying the RR scheme to least squares problems results in the random reshuffling Kaczmarz (RRK) method. However, since the theoretical understanding of RR itself is mainly limited to in-expectation complexity bounds and almost sure asymptotic convergence results \cite{cha2023tighter,mishchenko2020random,haochen2019random,nguyen2021unified,rajput2020closing,safran2020good}, the existing convergence analysis for RRK either only focuses on the average case, %{\color{blue} WHAT DOES AVERAGE REFER TO? AVERAGE OVER EPOCH? BUT THE CURRENT RESULT IS ALSO FOR AVERAGE CASE.} %of a single shuffle
or require additional assumption of a strongly convex objective function. See Section \ref{section-3.1} for more detailed discussions and insights into these results. 
Consequently, an interesting question arises: Is it possible to conduct a convergence analysis of the RRK method that does not rely on the current convergence framework of the RR method, but instead exploits the structure of the linear system itself? Furthermore, can this approach yield a superior convergence rate?

In this paper, we provide the first proof that the RRK method converges linearly to the unique least-norm solution, applicable to both full rank and rank-deficient coefficient matrices. Our convergence analysis treats the RRK method as a specific type of fixed-point iteration with dynamical iteration matrices, and we establish a uniform upper bound for the method by examining the properties of these matrices.
We further demonstrate that the convergence upper bound is tight, which means that there exists a linear system \(Ax = b\) for which the inequality for the upper bound holds with equality. 
%{\color{blue} Specifically, we show that when the iteration matrix is restricted to the range space of \(A^\top\), its spectral norm remains less than $1$. IS THIS CONCLUSION THAT IMPORTANT?}  

\subsection{Notations}

For any matrix $A \in \mathbb{R}^{m \times n}$,  we use $a_{i,:},A^\top,A^\dagger,\|A\|_2$, $\mbox{Range}(A)$,  and $\text{Null}(A)$ to denote the $i$-th row, the transpose, the Moore-Penrose pseudoinverse, the spectral norm, the range space, and the null space of $A$, respectively. We use $\sigma_{\min}(A)$ to denote the smallest nonzero singular value of $A$. For any vector $b \in \mathbb{R}^m$, we use $b_i$ and $\|b\|_2$ to denote the $i$-th  entry and the Euclidean norm of $b$, respectively. The identity matrix is denoted by $I$. For any integer $m\geqslant 1$, we denote $[m]:=\{1,\ldots,m\}$. For any random variables $\xi_1$ and $\xi_2$, we use $\mathbb{E}[\xi_1]$ and $\mathbb{E}[\xi_1\lvert \xi_2]$ to denote the expectation of $\xi_1$ and the conditional expectation of $\xi_1$ given $\xi_2$. % For any random variables $\xi$, we use $\mathbb{E}[\xi]$  to denote the expectation of $\xi$.

%Throughout the paper, for any random variables $\xi$, we use $\mathbb{E}[\xi]$  to denote the expectation of $\xi$.
%For an integer $m\geqslant 1$, let $[m]:=\{1,\ldots,m\}$.
%For any vector $x\in\mathbb{R}^n$, we use $x_i,x^\top$, and $\|x\|_2$ to denote the $i$-th entry, the transpose and the Euclidean norm of $x$, respectively.
%For any matrix $A\in\mathbb{R}^{m\times n}$, we use $a_{i,:},A^\top,A^\dagger,\|A\|_2,\|A\|_F$, $\mbox{Range}(A)$,  and $\text{Null}(A)$ to denote the $i$-th row, the transpose, the Moore-Penrose pseudoinverse, the spectral norm, the Frobenius norm, the column space, and the null space of $A$, respectively.
%The nonzero singular values of a matrix $A$ are $\sigma_1(A)\geqslant\sigma_2(A)\geqslant\ldots\geqslant\sigma_{r}(A):=\sigma_{\min}(A)>0$, where $r$ is the rank of $A$
%and $\sigma_{\min}(A)$ denotes the smallest nonzero singular values of $A$.  We see that $\|A\|_2=\sigma_{1}(A)$ and $\|A\|_F=\sqrt{\sum\limits_{i=1}^r \sigma_i(A)^2}$.

Throughout this paper, we use $x^*$ to denote an arbitrary solution of the linear system \eqref{main-prob}, and for any $x^0\in\mathbb{R}^n$, we set
$x^0_*:=A^\dagger b+(I-A^\dagger A)x^0
\
\text{and}
\ x_{LN}^*:=A^\dagger b.$
We mention that $x^0_*$
is the orthogonal projection of $x^0$ onto the set
$
\{x\in\mathbb{R}^n| A x= b\},
$
and $x_{LN}^*$ is the unique least-norm solution of the linear system.

%Consider $\pi_k=(\pi_{k,1},\pi_{k,2},\ldots,\pi_{k,m})$ as a permutation of $[m]$. We denote $\mathcal{P}_{\pi_{k,i}}:\mathbb{R}^n\to\mathbb{R}^n$
%as the orthogonal projection operator onto the hyperplane defined by $\{x\mid \langle a_{\pi_{k,i}},x\rangle=b_{\pi_{k,i}}\}$. This implies that for any $x\in\mathbb{R}^n$, 
%$$
%\mathcal{P}_{\pi_{k,i}}(x):=x-\frac{\langle a_{\pi_{k,i}},x\rangle-b_{\pi_{k,i}}}{\|a_{\pi_{k,i}}\|^2_2}a_{\pi_{k,i}}^\top.
%$$
%Further, we define 
%$$
%\mathcal{P}_{\pi_{k}}:=\mathcal{P}_{\pi_{k,m}}\circ\ldots\circ\mathcal{P}_{\pi_{k,1}}.
%$$

\subsection{Organization}
The remainder of the paper is organized as follows.
In Section 2,  we briefly review the RR method and the RRK method. We analyze the RRK method and show its linear convergence rate in Section 3.
%After introducing some preliminaries in Section 2, we present and analyze the RK method with adaptive stepsizes in Section 3.
%In Section 4, we perform some numerical experiments to show
%the effectiveness of the proposed method. 
Finally, we conclude the paper in Section 4.

\section{Random reshuffling Kacmarz method }
\label{section-rrk}

First, we provide a brief introduction to the SGD method and the RR method. Consider the following unconstrained optimization problem where the objective function is the
sum of a large number of component functions
$$f(x)=\frac{1}{m}\sum_{i=1}^mf_i(x)$$
where $f_i:\mathbb{R}^n\to \mathbb{R}$. The SGD method is a popular approach for solving such large-scale problems. It employs the update rule 
$$x^{k+1}=x^k-\alpha_k\nabla f_{i_k}(x^k),$$
where $\alpha_k$ is the step-size and $i_k$ is selected randomly. This approach allows SGD to progress towards the minimum of the function using only a subset of the gradient information at each step, which is computationally advantageous, especially for large-scale problems. In the specific case where the objective function is
\begin{equation}\label{quar-f}
	f(x)=\frac{1}{2 m}\|A x-b\|_2^2=\frac{1}{m} \sum_{i=1}^m f_i(x),
\end{equation}
with $f_i(x)=\frac{1}{2}\left( \langle a_i,x \rangle -b_i\right)^2$,  the SGD method with a step-size $\alpha_k=1/\|a_{i_k}\|^2_2$ reduces to
\begin{equation}
	\label{RK-iteration}
	x^{k+1}=x^k-\frac{\langle a_{i_k},x^k\rangle-b_{i_k}}{\|a_{i_k}\|^2_2}a_{i_k},
\end{equation}
which is exactly the RK method \cite{Str09}.

In the context of large-scale classification problems, studies \cite{gurbuzbalaban2021random} have shown that utilizing a without-replacement sampling scheme in SGD can lead to faster convergence. This particular variant, known as Random Reshuffling (RR), is widely applicable in practice. In the \( k \)-th epoch of the RR method, indices \( \pi_{k,1}, \pi_{k,2}, \ldots, \pi_{k,m} \) are sampled without replacement from \([m]\), meaning \( \pi_k = (\pi_{k,1}, \pi_{k,2}, \ldots, \pi_{k,m}) \) is a random permutation of  \([m]\). Then an inner loop is conducted and the iterates are sequentially updated by 
\begin{equation}\label{SGD-RR}
	x^{k}_{i} = x^k_{i-1} - \lambda_{k,i} \nabla f_{\pi_{k,i}}(x^k_{i-1}), \quad i = 1, \cdots, m,
\end{equation}
where \( \lambda_{k,i} \) are appropriately chosen step-sizes. Next, set \( x^{k+1}=x^{k+1}_0 = x^k_m \) and proceed to the next epoch until the stopping criterion is met. We address that a new permutation (shuffle) is generated at the beginning of each epoch, thereby justifying the term  ``reshuffling''. 

When $f(x)$ is  of the least-squares type, as specified by \eqref{quar-f}, the RR method \eqref{SGD-RR} with the step-sizes $\lambda_{k,i}=1/\|a_{\pi_{k,i}}\|_2^2$ results in the RRK method. The detailed procedure for RRK is outlined in Algorithm \ref{rrk}. For simplicity and clarity, the algorithm is described in terms of  $a_{\pi_{k,1}},\ldots, a_{\pi_{k,m}}$ and $b_{\pi_{k}}=(b_{\pi_{k,1}},\ldots,b_{\pi_{k,m}})^\top$ instead of the gradient $\nabla f_{\pi_{k,i}}(x_i^k)$.

\begin{algorithm}[htpb]
	\caption{Random reshuffling Kacmarz method (RRK) \label{rrk}}
	\begin{algorithmic}
		\Require
		$A\in \mathbb{R}^{m\times n}$, $b\in \mathbb{R}^m$, $k=0$ and an initial $x^0\in \mathbb{R}^{n}$.
		\begin{enumerate}
			\item[1:] Set $x^{k}_0:=x^k$ and generate a random permutation $\pi_k=(\pi_{k,1},\pi_{k,2},\ldots,\pi_{k,m})$ of $[m]$.
			\item[2:] {\bf for $i=1,\ldots,m$ do}
			%\item[3:] 
			$$
			x_{i}^{k}:=x_{i-1}^k-\frac{\langle a_{\pi_{k,i}},x_{i-1}^k\rangle-b_{\pi_{k,i}}}{\|a_{\pi_{k,i}}\|^2_2}a_{\pi_{k,i}}.
			$$
			{\bf end for}
			\item[3:] Set
			$
			x^{k+1}:=x_{m}^k.
			$
			\item[4:] If the stopping rule is satisfied, stop and go to output. Otherwise, set $k=k+1$ and return to Step $1$.
		\end{enumerate}
		
		\Ensure
		The approximate solution.
	\end{algorithmic}
\end{algorithm}

%We give some comments on the connection between Algorithm  \ref{rrk} and  the random reshuffling (RR) method.
%Algorithm  \ref{rrk} can be viewed as the RR method applied to the  following least-squares problem
%\begin{equation}\label{quar-f}
%	\min\limits_{x\in\mathbb{R}^n}f(x):=\frac{1}{2 m}\|A x-b\|_2^2=\frac{1}{m} \sum_{i=1}^m f_i(x),
%\end{equation}
%where $f_i(x)=\frac{1}{2}\left( \langle a_i,x \rangle -b_i\right)^2$. 
%In each epoch $k$ of the RR method, we sample indices $\pi_{k,1},\pi_{k,2},\ldots,\pi_{k,m}$ without replacement from $[m]$, i.e., $\pi_k=(\pi_{k,1},\pi_{k,2},\ldots,\pi_{k,m})$ is a random
%permutation of the set $[m]$, and proceed with $m$ iterates of the form
%\begin{equation}\label{SGD}
%	x^{k+1}_{i+1}=x^k_i-\lambda_{k,i} \nabla f_{\pi_{k,i}}(x^k_{i}),
%\end{equation}
%where  $\lambda_{k,i}$ are  appropriately chosen step-sizes. We then set $x^{k+1}=x^k_m$.
%Since here $\nabla f_{\pi_{k,i}}(x_i^k)=\left( \langle a_{{\pi_{k,i}}},x_i^k \rangle -b_{{\pi_{k,i}}}\right)a_{{\pi_{k,i}}}$, one can recover Algorithm  \ref{rrk}  by applying RR  to the problem \eqref{quar-f} with the step-sizes $\lambda_{k,i}=1/\|a_{\pi_{k,i}}\|_2^2$.

We note that, as a byproduct of our analysis, our convergence results provide new insights for the shuffle-once and incremental variants of the Kaczmarz method (see Section \ref{sec:3.3}). 

\begin{itemize}
	\item {\bf Shuffle-once:} The shuffle-once algorithm \cite{mishchenko2020random,safran2020good} closely resembles the RR method, with the distinction that it shuffles the dataset only once at the start and uses this random permutation for all subsequent epochs. Formally, the indices \( \pi_{1}, \pi_{2}, \ldots, \pi_{m} \) are sampled without replacement from \([m]\) at the beginning, and for any \(k \geqslant 1\), we set \( \pi_k = (\pi_{1}, \pi_{2}, \ldots, \pi_{m}) \). 
	\item {\bf Incremental gradient:} The incremental gradient algorithm \cite{mishchenko2020random,safran2020good} is similar to shuffle-once, but the initial permutation is deterministic rather than random; that is, \( \pi_k = (1, 2, \ldots, m) \) for any \( k \geqslant 0 \).
\end{itemize}

When \(f(x)\) is of the least-squares type,  we refer to the shuffle-once algorithm and the incremental gradient algorithm with step sizes \(\lambda_{k,i}=1/\|a_{\pi_{k,i}}\|_2^2\) as the shuffle-once Kaczmarz (SOK) method and the incremental Kaczmarz (IK) method, respectively.

\section{Linear convergence of RRK}

In this section, we present our proof of the linear convergence of the RRK method. For convenience, we introduce some auxiliary variables.
Let $\pi_k=(\pi_{k,1},\pi_{k,2},\ldots,\pi_{k,m})$ be a permutation of $[m]$. Define 
\begin{equation}
	\label{proj_pi}
	T_{\pi_k}:=\left(I-\frac{a_{\pi_{k,m}}a^\top_{\pi_{k,m}}}{\|a_{\pi_{k,m}}\|^2_2}\right)\cdots	\left(I-\frac{a_{\pi_{k,1}}a^\top_{\pi_{k,1}}}{\|a_{\pi_{k,1}}\|^2_2}\right)
\end{equation}
and 
$$
g_{\pi_k}:=\sum_{i=1}^{m} \left(I-\frac{a_{\pi_{k,m}}a^\top_{\pi_{k,m}}}{\|a_{\pi_{k,m}}\|^2_2}\right)\cdots	\left(I-\frac{a_{\pi_{k,i+1}}a^\top_{\pi_{k,i+1}}}{\|a_{\pi_{k,i+1}}\|^2_2}\right)\frac{b_{\pi_{k,i}}}{\|a_{\pi_{k,i}}\|^2_2}a_{\pi_{k,i}}.
$$
Then the $k$-th epoch of the RRK method can be rewritten as
\begin{equation}
	\label{rrk_eq}
	x^{k+1}=T_{\pi_k}(x^k)+g_{\pi_k}.
\end{equation}
Since $T_{\pi_k}$ characterizes the transformation of the iterates throughout an entire epoch, we refer to it as the \textit{iteration matrix}. 

As $A^\dagger A$ is the orthogonal projector onto $\operatorname{Range}(A^\top)$, the following lemma illustrates that when the iteration matrix
$T_{\pi_k}$ is restricted to the range space of  $A^\top$, its spectral norm is less than $1$. In fact, this lemma can be derived from Theorem 3.7.4 in \cite{bauschke1997method}, which utilizes the concepts of regularity and strongly attracting mappings. For completeness, we here present a novel and straightforward proof.

%the spectral norm of $T_{\pi_k}$ restricted to the range space of $A^\top$ is less than $1$.   

\begin{lemma}
	\label{lemma-tpik}
	Assume that $T_{\pi_k}$ is defined as \eqref{proj_pi}. Then
	$$
	\|T_{\pi_k}A^\dagger A\|_2<1.
	$$
\end{lemma}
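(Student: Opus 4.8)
The plan is to exploit the fact that each factor $P_i := I-\frac{a_{\pi_{k,i}}a_{\pi_{k,i}}^\top}{\|a_{\pi_{k,i}}\|_2^2}$ appearing in \eqref{proj_pi} is the orthogonal projector onto the hyperplane $a_{\pi_{k,i}}^\perp=\{x\in\mathbb{R}^n:\langle a_{\pi_{k,i}},x\rangle=0\}$, so that $T_{\pi_k}=P_m P_{m-1}\cdots P_1$ is a composition of orthogonal projections. Since each $P_i$ is nonexpansive, $\|T_{\pi_k}x\|_2\le\|x\|_2$ for every $x$, and the whole point is to upgrade this to a \emph{strict} inequality once we restrict to $\operatorname{Range}(A^\top)$. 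First I would record the elementary equality case for a single projector: $\|P_i x\|_2=\|x\|_2$ holds if and only if $x\in a_{\pi_{k,i}}^\perp$, since $\|P_i x\|_2^2=\|x\|_2^2-\langle a_{\pi_{k,i}},x\rangle^2/\|a_{\pi_{k,i}}\|_2^2$.

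The key step is the following claim: $\|T_{\pi_k}x\|_2=\|x\|_2$ forces $x$ to lie in $\bigcap_{i=1}^m a_{\pi_{k,i}}^\perp=\operatorname{Null}(A)$. To prove it I would telescope. Setting $x_0=x$ and $x_i=P_i x_{i-1}$, the chain $\|x_m\|_2\le\|x_{m-1}\|_2\le\cdots\le\|x_0\|_2$ collapses to equalities throughout whenever $\|T_{\pi_k}x\|_2=\|x_m\|_2=\|x_0\|_2$. The equality case for $P_1$ then gives $x_0\in a_{\pi_{k,1}}^\perp$, hence $x_1=x_0$; feeding this into the equality case for $P_2$ gives $x_0\in a_{\pi_{k,2}}^\perp$, and so on inductively, so that $x_0\in a_{\pi_{k,i}}^\perp$ for every $i$. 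Since $\operatorname{Null}(A)=\{x:\langle a_j,x\rangle=0\ \text{for all }j\in[m]\}=\bigcap_{i=1}^m a_{\pi_{k,i}}^\perp$ (using that $\pi_k$ is a permutation), this proves the claim; equivalently, every $x\notin\operatorname{Null}(A)$ satisfies $\|T_{\pi_k}x\|_2<\|x\|_2$.

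Finally I would assemble the pieces. By compactness of the unit sphere the operator norm is attained, say $\|T_{\pi_k}A^\dagger A\|_2=\|T_{\pi_k}A^\dagger A y_0\|_2$ with $\|y_0\|_2=1$. Write $z_0=A^\dagger A y_0$; since $A^\dagger A$ is the orthogonal projector onto $\operatorname{Range}(A^\top)=\operatorname{Null}(A)^\perp$, we have $z_0\in\operatorname{Null}(A)^\perp$ and $\|z_0\|_2\le\|y_0\|_2=1$. If $z_0=0$ the norm is $0$; otherwise $z_0\in\operatorname{Null}(A)^\perp\setminus\{0\}$ cannot lie in $\operatorname{Null}(A)$, so the claim yields $\|T_{\pi_k}z_0\|_2<\|z_0\|_2\le1$. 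In either case $\|T_{\pi_k}A^\dagger A y_0\|_2<1$, which gives the conclusion.

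I expect the main obstacle to be the strictness rather than the bound $\le 1$, which is immediate. The delicate point is controlling the equality case across the entire product of projections: a priori the intermediate vectors $x_i$ change, so one must argue that norm preservation at each stage forces $x_0$ itself (not merely $x_{i-1}$) into the successive hyperplanes, which is what makes the intersection collapse to $\operatorname{Null}(A)$. Coupling this with the orthogonality $\operatorname{Range}(A^\top)\perp\operatorname{Null}(A)$ is what rules out a unit-norm fixed direction inside $\operatorname{Range}(A^\top)$.
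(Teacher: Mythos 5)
Your proof is correct and rests on the same two ingredients as the paper's: the single-projector identity $\|P_ix\|_2^2=\|x\|_2^2-\langle a_{\pi_{k,i}},x\rangle^2/\|a_{\pi_{k,i}}\|_2^2$ and the fact that a nonzero vector of $\operatorname{Range}(A^\top)$ cannot be orthogonal to every row of $A$. The only difference is organizational: the paper argues directly by taking the first index $i_0$ with $\langle a_{\pi_{k,i_0}},A^\dagger Ax\rangle\neq 0$ (so the earlier factors act as the identity and the $i_0$-th one strictly contracts), whereas you run the contrapositive equality-case telescoping argument and then invoke compactness of the unit sphere to attain the operator norm --- both routes are valid and essentially equivalent.
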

\begin{proof}
	The objective is to demonstrate that  for any $x\neq 0$, 
	$\|T_{\pi_k}A^\dagger Ax\|_2<\|x\|_2$. If $A^\dagger Ax=0$ the inequality is already satisfied. If $A^\dagger Ax\neq0$, then $A (A^\dagger Ax)\neq 0$, as $\text{Null}(A^\dagger)=\text{Null}(A^\top)$. Consequently, there exists a certain $i_0\in[m]$ such that $\langle a_{\pi_{k,i_0}},A^\dagger Ax \rangle\neq 0$, implying
	$$
	\left\|\left(I-\frac{a_{\pi_{k,i_0}}a^\top_{\pi_{k,i_0}}}{\|a_{\pi_{k,i_0}}\|^2_2}\right)A^\dagger Ax\right\|^2_2=\|A^\dagger Ax\|^2_2-\frac{\langle a_{\pi_{k,i_0}},A^\dagger Ax \rangle^2}{\|a_{\pi_{k,i_0}}\|^2_2}<\|A^\dagger Ax\|^2_2\leqslant \|x\|^2_2.
	$$
	Therefore, we obtain
	$$
	\begin{aligned}
		\|T_{\pi_k}A^\dagger Ax\|^2_2&=\left\|\left(I-\frac{a_{\pi_{k,m}}a^\top_{\pi_{k,m}}}{\|a_{\pi_{k,m}}\|^2_2}\right)\cdots	\left(I-\frac{a_{\pi_{k,i_0}}a^\top_{\pi_{k,i_0}}}{\|a_{\pi_{k,i_0}}\|^2_2}\right)A^\dagger Ax\right\|_2^2\\
		&\leqslant \left\|\left(I-\frac{a_{\pi_{k,m}}a^\top_{\pi_{k,m}}}{\|a_{\pi_{k,m}}\|^2_2}\right)\right\|^2_2\cdots	\left\|\left(I-\frac{a_{\pi_{k,i_0}}a^\top_{\pi_{k,i_0}}}{\|a_{\pi_{k,i_0}}\|^2_2}\right)A^\dagger Ax\right\|_2^2\\
		&\leqslant \left\|\left(I-\frac{a_{\pi_{k,i_0}}a^\top_{\pi_{k,i_0}}}{\|a_{\pi_{k,i_0}}\|^2_2}\right)A^\dagger Ax\right\|_2^2<\|x\|^2_2
	\end{aligned}
	$$
	as desired. This completes the proof of the lemma.
\end{proof}

\subsection{Convergence results for the RRK method}

We now present convergence results for Algorithm \ref{rrk}.
\begin{theorem}
	\label{main-THM}
	Suppose that  the linear system $Ax=b$ is consistent and $x^0\in\mathbb{R}^n$ is an arbitrary initial vector.
	Let $x^0_*=A^{\dagger}b+(I-A^\dagger A)x^0$. Then the iteration sequence $\{x^k\}_{k\geqslant 0}$ generated by Algorithm \ref{rrk} satisfies
	$$ \|x^{k+1}-x^{0}_*\|_2 
	\leqslant \|T_{\pi_k}A^\dagger A\|_2\cdot\|x^k-x^{0}_*\|_2,
	$$
	where $T_{\pi_k}$is defined as \eqref{proj_pi} and $\|T_{\pi_k}A^\dagger A\|_2<1$.
\end{theorem}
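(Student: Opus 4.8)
The plan is to exploit the affine structure of a full epoch. Since each inner update $x \mapsto x - \frac{\langle a_i, x\rangle - b_i}{\|a_i\|_2^2}a_i$ is the orthogonal projection onto the hyperplane $\{x : \langle a_i, x\rangle = b_i\}$, its linear part is $I - a_i a_i^\top/\|a_i\|_2^2$ and its constant part is $(b_i/\|a_i\|_2^2)a_i$. Composing the $m$ projections in the order prescribed by $\pi_k$ shows that the epoch map is precisely the affine map $x \mapsto T_{\pi_k}x + g_{\pi_k}$ of \eqref{rrk_eq}. Crucially, any solution $x^*$ of $Ax=b$ satisfies $\langle a_i, x^*\rangle = b_i$ for every $i$ and is therefore fixed by each projection; hence $x^0_*$, which is itself a solution, is a fixed point of the epoch map, i.e. $x^0_* = T_{\pi_k}x^0_* + g_{\pi_k}$. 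Subtracting this identity from \eqref{rrk_eq} eliminates $g_{\pi_k}$ and yields the clean relation $x^{k+1}-x^0_* = T_{\pi_k}(x^k - x^0_*)$.

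This identity alone is not enough, because $T_{\pi_k}$ fixes every vector in $\operatorname{Null}(A)$ and so its spectral norm can equal $1$. The key step, and the main obstacle, is therefore to show that the error vector never acquires a null-space component, that is, $x^k - x^0_* \in \operatorname{Range}(A^\top)$ for all $k$. I would prove this by induction. For the base case, $x^0 - x^0_* = A^\dagger A x^0 - A^\dagger b \in \operatorname{Range}(A^\top)$; equivalently, since $x^0_*$ is the orthogonal projection of $x^0$ onto the solution set $A^\dagger b + \operatorname{Null}(A)$, the residual $x^0 - x^0_*$ is orthogonal to $\operatorname{Null}(A)$ and hence lies in $\operatorname{Range}(A^\top)$. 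For the inductive step, it suffices to observe that each elementary factor $I - a_i a_i^\top/\|a_i\|_2^2$ maps $\operatorname{Range}(A^\top)$ into itself: if $w \in \operatorname{Range}(A^\top)$, then $(I - a_i a_i^\top/\|a_i\|_2^2)w = w - (\langle a_i, w\rangle/\|a_i\|_2^2)a_i$ is a linear combination of $w$ and $a_i$, both of which lie in $\operatorname{Range}(A^\top)$ (recall $a_i$ is a row of $A$, so $a_i \in \operatorname{Range}(A^\top)$). Composing the factors shows $T_{\pi_k}$ preserves $\operatorname{Range}(A^\top)$, so $x^{k+1}-x^0_* = T_{\pi_k}(x^k - x^0_*)$ again lies in $\operatorname{Range}(A^\top)$, closing the induction.

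Once the invariance is in hand, the conclusion follows immediately. Because $A^\dagger A$ is the orthogonal projector onto $\operatorname{Range}(A^\top)$, the membership $x^k - x^0_* \in \operatorname{Range}(A^\top)$ gives $A^\dagger A(x^k - x^0_*) = x^k - x^0_*$. Inserting this into the error recursion yields $x^{k+1}-x^0_* = T_{\pi_k}A^\dagger A(x^k - x^0_*)$; taking Euclidean norms and using submultiplicativity of the spectral norm gives $\|x^{k+1}-x^0_*\|_2 \le \|T_{\pi_k}A^\dagger A\|_2\,\|x^k-x^0_*\|_2$, and Lemma \ref{lemma-tpik} supplies $\|T_{\pi_k}A^\dagger A\|_2 < 1$. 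The only genuinely nontrivial ingredient is the range-space invariance, which is exactly what lets us replace the merely non-expansive $T_{\pi_k}$ by the strictly contractive $T_{\pi_k}A^\dagger A$.
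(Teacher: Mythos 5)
Your proposal is correct and follows essentially the same route as the paper: establish the fixed-point identity $x^0_*=T_{\pi_k}x^0_*+g_{\pi_k}$, show the error stays in $\operatorname{Range}(A^\top)$ so that the projector $A^\dagger A$ can be inserted, and then invoke Lemma \ref{lemma-tpik}. The only difference is cosmetic: you spell out the range-space invariance by induction over the factors of $T_{\pi_k}$, whereas the paper asserts $x^k\in x^0+\operatorname{Range}(A^\top)$ directly from the form of the updates.
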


\begin{proof}
	According to Algorithm \ref{rrk}, one has $x^k\in x^0+\operatorname{Range}(A^\top)$. Besides, $ x^0_* =A^{\dagger}b+(I-A^\dagger A)x^0 = A^{\dagger}(b-Ax^0) + x^0 \in x^0 + \operatorname{Range}(A^\top)$.  Thus $ x^k - x_*^0 \in \operatorname{Range}(A^\top) $. Since  $A^\dagger A$ is the orthogonal projector onto $\operatorname{Range}(A^\top)$, one has 
	\begin{equation}
		\label{lemma-proof-0828-1}
		x^k-x^0_*=A^\dagger A(x^k-x^0_*).
	\end{equation}
	Therefore, it can be obtained from \eqref{rrk_eq} that
	\begin{equation}
		\label{for-remark}
		\begin{aligned}
			\|x^{k+1}-x^0_*\|_2&=\|T_{\pi_k}(x^k)+g_{\pi_k}-x^0_*\|_2\\
			&=\|T_{\pi_k}(x^k-x^0_*)\|_2
			\\
			&=\|T_{\pi_k}A^\dagger A(x^k-x^0_*)\|_2
			\\
			&\leqslant \|T_{\pi_k}A^\dagger A\|_2\cdot\|(x^k-x^0_*)\|_2,
		\end{aligned}
	\end{equation}
	where the second equality follows from  $x^0_*=T_{\pi_k}(x^0_*)+g_{\pi_k}$, and the third equality follows from \eqref{lemma-proof-0828-1}. It has been shown in Lemma \ref{lemma-tpik} that $\|T_{\pi_k}A^\dagger A\|_2<1$. This complete the proof of this theorem.
\end{proof}

Let $S_m$ denote the set of all permutations of $[m]$ and let
\begin{equation}
	\label{def-rho}
	\rho_{RRK}=\max_{\pi\in S_m} \|T_{\pi}A^\dagger A\|_2.
\end{equation}
Building on Theorem \ref{main-THM}, we derive the following corollary and demonstrate the linear convergence of Algorithm \ref{rrk}.

\begin{corollary}
	\label{main-rest}
	Under the same conditions of Theorem \ref{main-THM}, the iteration sequence 
	$\{x^k\}_{k\geqslant 0}$ generated by Algorithm \ref{rrk} satisfies
	$$ \|x^{k}-x^{0}_*\|_2 
	\leqslant \rho_{RRK}^k\|x^0-x^{0}_*\|_2,
	$$
	where $\rho_{RRK}$ is defined as \eqref{def-rho} and $\rho_{RRK}<1$.
\end{corollary}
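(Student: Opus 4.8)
The plan is to obtain the corollary directly from the per-epoch estimate in Theorem \ref{main-THM} by iterating it across epochs, so essentially no new machinery is required. The first observation is that for every epoch index $k$, the realized permutation $\pi_k$ belongs to $S_m$, so by the definition \eqref{def-rho} of $\rho_{RRK}$ as a maximum over $S_m$ we have $\|T_{\pi_k}A^\dagger A\|_2 \leqslant \rho_{RRK}$. Combining this with the one-step contraction $\|x^{k+1}-x^0_*\|_2 \leqslant \|T_{\pi_k}A^\dagger A\|_2\cdot\|x^k-x^0_*\|_2$ from Theorem \ref{main-THM} yields the uniform bound $\|x^{k+1}-x^0_*\|_2 \leqslant \rho_{RRK}\,\|x^k-x^0_*\|_2$, valid for whichever permutation is drawn at that epoch.

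Next I would unwind this recursion by induction on $k$. The base case $k=0$ is trivial, and the inductive step chains the uniform one-step bound, giving
\[
\|x^k-x^0_*\|_2 \leqslant \rho_{RRK}\,\|x^{k-1}-x^0_*\|_2 \leqslant \cdots \leqslant \rho_{RRK}^k\,\|x^0-x^0_*\|_2,
\]
which is the claimed estimate. I would emphasize that, because the factor $\rho_{RRK}$ dominates every $\|T_{\pi_k}A^\dagger A\|_2$ regardless of which permutation is drawn, this inequality holds surely (pathwise) along every sequence of reshufflings, not merely in expectation.

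The only point that needs genuine justification is the strict inequality $\rho_{RRK}<1$, and this is where I expect the single (mild) obstacle to lie. Lemma \ref{lemma-tpik} gives $\|T_\pi A^\dagger A\|_2<1$ for each individual permutation $\pi$, but a supremum of quantities each strictly below $1$ can in principle equal $1$. Here this failure is excluded because $S_m$ is a finite set: the maximum in \eqref{def-rho} is attained at some $\pi^\star\in S_m$, and hence $\rho_{RRK}=\|T_{\pi^\star}A^\dagger A\|_2<1$ by Lemma \ref{lemma-tpik}. Finiteness of the index set is therefore the crux that converts the per-permutation strict inequality into a uniform contraction factor, and together with the telescoped recursion it completes the proof.
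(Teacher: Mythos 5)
Your proposal is correct and follows exactly the route the paper intends: chain the per-epoch bound of Theorem \ref{main-THM}, dominate each $\|T_{\pi_k}A^\dagger A\|_2$ by the maximum $\rho_{RRK}$ over the finite set $S_m$, and invoke Lemma \ref{lemma-tpik} at the maximizing permutation to get $\rho_{RRK}<1$. Your observation that finiteness of $S_m$ is the crux matches the paper's own remark that the finite sampling space is what yields a uniform (pathwise, not merely in-expectation) contraction factor.
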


%\begin{remark}

Although our algorithm is randomized, it exhibits deterministic linear convergence, which may seem confusing. This contrasts with much of the literature on randomized iterative methods \cite{Str09,Gow15,han2024randomized,zeng2024adaptive}, where the focus is typically on the linear convergence of the expected error norm \(\mathbb{E}[\|x^k - x^*_0\|^2_2]\). The key reason is that our sampling space \(S_m\) is finite, allowing us to establish a uniform upper bound \(\rho_{RRK}\) in \eqref{def-rho}. In fact, deterministic linear convergence of \(\|x^k - x^*_0\|^2_2\) can result in lower iteration complexity compared to the linear convergence of \(\mathbb{E}[\|x^k - x^*_0\|^2_2]\). For further discussion, see \cite[Section 2.2]{su2023linear}.

%\end{remark}

\begin{remark}[Least-norm solution]
	If the initial vector $x^0\in\mbox{Range}(A^\top)$, then we have $x^0_*=A^{\dagger}b=x^*_{LN}$. This implies that the iteration sequence $ \{ x^k \}_{k\geqslant 0}$ generated by Algorithm \ref{rrk} now converges to the unique least-norm solution $x^*_{LN}$. 
\end{remark}

\begin{remark}[Tightness]
	Consider the matrix $A$ whose rows  satisfy the following conditions
	\[
	\langle a_i, a_j \rangle = \begin{cases}
		1 & \text{if } i = j, \\
		0 & \text{if } i \neq j.
	\end{cases}
	\]
	Then, for any permutation \(\pi_k\) of \([m]\), the matrix \(T_{\pi_{k}}\) in \eqref{proj_pi} simplifies to
	\[
	T_{\pi_{k}} = I - \sum_{i=1}^{m} a_i a_i^\top = I - A^\top A.
	\]
	Hence, we have
	\[
	T_{\pi_{k}} A^\dagger A = (I - A^\top A) A^\dagger A = A^\dagger A - A^\top A A^\dagger A = A^\dagger A - A^\top A = 0.
	\]
	This implies that the inequality in \eqref{for-remark} becomes an equality. Consequently, the upper bounds in Theorem \ref{main-THM} and Corollary \ref{main-rest} are also exact, indicating that these upper bounds are tight. In fact, for the linear system with this type of coefficient matrix, the RRK method can obtain the solution in a single step. 
\end{remark}

\subsection{Comparison to the existing convergence results for the RR method}
\label{section-3.1}

%First, we present some convergence results for the RR method from the literature, restating them for the cases
First, we restate some existing convergence results for the RR method in the context of least squares problems. We note that Theorems 2 and 3 in \cite{mishchenko2020random} were originally established for both strongly convex and convex problems. Here, we adapt these results to the least squares setting to enable a more direct comparison with our result.
\begin{theorem}[\cite{mishchenko2020random}, Theorem 2]
	\label{THM-Strongly}
	Suppose that the objective function \( f(x) \) is given by \eqref{quar-f} and the linear system \( Ax = b \) is consistent. If the coefficient matrix \( A \) is full column rank and the step-size \( \lambda_{k,i} = \gamma \) is a fixed constant satisfying \( \gamma \leqslant \frac{1}{\sqrt{2} m \|A\|^2_2} \), then the iteration sequence \( \{x^k\}_{k \geqslant 0} \) generated by the RR method \eqref{SGD-RR} satisfies 
	\[
	\mathbb{E}[\|x^k - A^\dagger b\|^2_2] \leqslant \left(1 - \frac{\gamma m \sigma_{\min}^2(A)}{2}\right)^k \|x^0 - A^\dagger b\|^2_2.
	\]
\end{theorem}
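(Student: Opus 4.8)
The plan is to treat this statement as a specialization of the general random-reshuffling theorem for strongly convex objectives, and to prove it in two stages: first verify that the least-squares objective \eqref{quar-f} meets the hypotheses of the RR framework and record the relevant problem constants, then carry out an epoch-level contraction argument that exploits the consistency of $Ax=b$. The structure will closely parallel the exact-product analysis already used for $T_{\pi_k}$ in \eqref{proj_pi}.

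First I would record the constants. Each component $f_i(x)=\tfrac12(\langle a_i,x\rangle-b_i)^2$ is convex and $\|a_i\|_2^2$-smooth, and since $\|a_i\|_2\le\|A\|_2$ a uniform smoothness bound $\|A\|_2^2$ is available; this is what drives the step-size restriction $\gamma\le\frac{1}{\sqrt2\,m\|A\|_2^2}$. Because $A$ has full column rank, $A^\top A\succeq\sigma_{\min}^2(A)\,I$ on all of $\mathbb{R}^n$, so $f$ is strongly convex with a modulus governed by $\sigma_{\min}^2(A)$, which is what appears in the contraction factor. Crucially, consistency gives $b_i=\langle a_i,x^*\rangle$ for $x^*=A^\dagger b$, hence every component gradient vanishes at the optimum, $\nabla f_i(x^*)=0$; this makes the gradient variance at the solution $\sigma_*^2=\frac1m\sum_i\|\nabla f_i(x^*)\|_2^2$ equal to zero, which is precisely what removes the additive noise floor from generic RR bounds and upgrades the result to exact linear convergence toward $A^\dagger b$.

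The substance is the epoch recursion, which I would write in closed form using consistency. Since $\nabla f_i(x)=a_ia_i^\top(x-x^*)$, each inner step becomes $x_i^k-x^*=(I-\gamma a_{\pi_{k,i}}a_{\pi_{k,i}}^\top)(x_{i-1}^k-x^*)$, so one full epoch is a single linear map $x^{k+1}-x^*=M_{\pi_k}(x^k-x^*)$ with $M_{\pi_k}=\prod_{i=m}^{1}\bigl(I-\gamma a_{\pi_{k,i}}a_{\pi_{k,i}}^\top\bigr)$, in direct analogy with \eqref{proj_pi}. Conditioning on $x^k$, which is independent of the fresh permutation $\pi_k$, gives $\mathbb{E}\bigl[\|x^{k+1}-x^*\|_2^2\mid x^k\bigr]=(x^k-x^*)^\top\,\mathbb{E}_{\pi}\!\bigl[M_{\pi}^\top M_{\pi}\bigr]\,(x^k-x^*)$, so the whole theorem reduces to a single spectral estimate, namely $\lambda_{\max}\!\bigl(\mathbb{E}_{\pi}[M_{\pi}^\top M_{\pi}]\bigr)\le 1-\frac{\gamma m\sigma_{\min}^2(A)}{2}$; taking total expectations and iterating this bound over $k$ epochs then yields the claim.

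The main obstacle is exactly this spectral estimate. Each factor $I-\gamma a_ia_i^\top$ is non-expansive once $\gamma\|a_i\|_2^2\le1$ (guaranteed by the step-size bound), with $\|(I-\gamma a_ia_i^\top)v\|_2^2\le\|v\|_2^2-\gamma\langle a_i,v\rangle^2$, but composing $m$ such factors only gives the soft bound $\|M_\pi\|_2\le1$; the work is to extract a strict, quantitative contraction. I would expand $M_\pi=I-\gamma A^\top A+\gamma^2 R_\pi$, so that $\mathbb{E}_{\pi}[M_\pi^\top M_\pi]=I-2\gamma A^\top A+\gamma^2\bar R$, use $A^\top A\succeq\sigma_{\min}^2(A)\,I$ for the first-order descent term, and then bound the remainder $\|\bar R\|_2$ so that the step-size condition $\gamma\le\frac{1}{\sqrt2\,m\|A\|_2^2}$ forces the reshuffling-induced second-order contribution to be dominated by the first-order descent, leaving a contraction of the stated form. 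Controlling $\bar R$ uniformly over all permutations — equivalently, controlling the drift of the inner iterates away from $x^k$ within an epoch — is the delicate part, and is where the without-replacement structure, the precise constant in the step-size bound, and the pinning-down of the final contraction constant all reside; the consistency condition $\sigma_*=0$ is what guarantees that no constant term survives. As an alternative to this self-contained route, one could instead invoke the general strongly convex RR theorem directly once the hypotheses and constants above are in hand.
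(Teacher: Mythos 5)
The paper gives no proof of this statement at all: Theorem \ref{THM-Strongly} is imported from \cite{mishchenko2020random} (their Theorem 2 for strongly convex objectives) with the constants specialized to the least-squares objective \eqref{quar-f}. Your first route --- verifying the hypotheses of the general RR theorem and reading off the constants --- is therefore exactly what the paper does, and you identify the right ingredients: each $f_i$ is convex and $\|a_i\|_2^2$-smooth with $\|a_i\|_2^2\leqslant\|A\|_2^2$, full column rank gives strong convexity of $f$ through $\sigma_{\min}^2(A)$, and consistency forces $\nabla f_i(A^\dagger b)=0$ for every $i$, so the gradient variance at the optimum $\sigma_*^2$ vanishes and the additive noise floor in the generic bound disappears. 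That observation about $\sigma_*^2=0$ is the one genuinely substantive point in the specialization, and you state it correctly.

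Your second, self-contained route is a genuinely different argument, and in spirit it is closer to the paper's own analysis of RRK (the product $M_{\pi}=\prod_i\bigl(I-\gamma a_{\pi_i}a_{\pi_i}^\top\bigr)$ is the step-size-$\gamma$ analogue of $T_{\pi_k}$ in \eqref{proj_pi}); if completed it would even yield a deterministic, per-permutation contraction rather than one in expectation. But as written it has a real gap: the entire theorem is reduced to the spectral estimate $\lambda_{\max}\bigl(\mathbb{E}_{\pi}[M_{\pi}^\top M_{\pi}]\bigr)\leqslant 1-\tfrac{\gamma m\sigma_{\min}^2(A)}{2}$, and that estimate is never established. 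Bounding the second-order remainder $\bar R$ uniformly over permutations --- equivalently, controlling the within-epoch drift of the inner iterates --- is not a routine afterthought; it is where all of the difficulty of the without-replacement analysis in \cite{mishchenko2020random} is concentrated, and you explicitly defer it. Note also that the naive bound $\|R_\pi\|_2\lesssim m^2\|A\|_2^4$ combined with $\gamma\leqslant\frac{1}{\sqrt2\,m\|A\|_2^2}$ only shows the remainder is $O(1)$ relative to the identity, not that it is dominated by the first-order descent term $2\gamma A^\top A\succeq 2\gamma\sigma_{\min}^2(A)I$, which can be much smaller when $A$ is ill-conditioned; closing this requires the finer variance/bracketing arguments of the cited work. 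So the proposal is acceptable only insofar as it falls back on invoking the general theorem, which is precisely the paper's (tacit) proof.
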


\begin{theorem}[\cite{mishchenko2020random}, Theorem 3]
	\label{THM-smooth}
	Suppose that the objective function \( f(x) \) is given by \eqref{quar-f} and the linear system \( Ax = b \) is consistent. Let \( \{x^k\}_{k \geqslant 0} \) be the sequence generated by the RR method \eqref{SGD-RR}. If the step-size \( \lambda_{k,i} = \gamma \) is a fixed constant satisfying \( \gamma \leqslant \frac{1}{\sqrt{2} m \|A\|^2_2} \), then the average iterate \( \hat{x}^k = \frac{1}{k} \sum_{i=1}^{k} x^i \) satisfies
	\[
	\mathbb{E}[f(\hat{x}^k)] \leqslant \frac{\|x^0 - x^*\|^2_2}{2 \gamma m k}.
	\]
\end{theorem}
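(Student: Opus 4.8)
The plan is to reproduce the convex-case random-reshuffling analysis of \cite{mishchenko2020random}, specialized to the least-squares objective \eqref{quar-f}, and to check that consistency removes the only term that would otherwise persist. Two structural facts drive everything. Consistency gives the \emph{interpolation} property: since $Ax^*=b$, every component satisfies $f_i(x^*)=0$ and $\nabla f_i(x^*)=0$, so $f^*=f(x^*)=0$ and the gradient variance at the optimum $\sigma_*^2:=\frac1m\sum_{i=1}^m\|\nabla f_i(x^*)\|_2^2$ is exactly $0$. Moreover each $f_i(x)=\tfrac12(\langle a_i,x\rangle-b_i)^2$ is a one-dimensional quadratic, so $\|\nabla f_i(x)\|_2^2=2L_i f_i(x)$ holds with equality, where $L_i=\|a_i\|_2^2\le\|A\|_2^2=:L$; together with convexity of $f$ and the step-size bound $\gamma L\le\frac{1}{\sqrt2\,m}$ these are the only analytic inputs.

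First I would reduce the claim to a per-epoch descent inequality. By convexity of $f$ and Jensen, $f(\hat x^k)\le\frac1k\sum_{i=1}^k f(x^i)$, so it is enough to establish a one-epoch estimate of the form
\[
\mathbb{E}\!\left[\|x^{j+1}-x^*\|_2^2\mid x^j\right]\le\|x^j-x^*\|_2^2-2\gamma m\,f(x^j)+R_j,
\]
and to show that the step-size condition forces the remainder $R_j$ to vanish under interpolation. Telescoping such an inequality and applying the tower property gives $\sum_j\mathbb{E}[f(x^j)]\le\frac{\|x^0-x^*\|_2^2}{2\gamma m}$, after which Jensen yields the advertised $\frac{\|x^0-x^*\|_2^2}{2\gamma m k}$ (using $f^*=0$, and the averaging window of $\hat x^k$ matching the telescoped range up to the harmless inclusion of one endpoint).

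To produce the one-epoch estimate I would expand the inner recursion $x_i^j=x_{i-1}^j-\gamma\nabla f_{\pi_{j,i}}(x_{i-1}^j)$ and telescope across the epoch,
\[
\|x^{j+1}-x^*\|_2^2=\|x^j-x^*\|_2^2-2\gamma\sum_{i=1}^m\langle\nabla f_{\pi_{j,i}}(x_{i-1}^j),\,x_{i-1}^j-x^*\rangle+\gamma^2\sum_{i=1}^m\|\nabla f_{\pi_{j,i}}(x_{i-1}^j)\|_2^2,
\]
then apply convexity $\langle\nabla f_{\pi_{j,i}}(x_{i-1}^j),x_{i-1}^j-x^*\rangle\ge f_{\pi_{j,i}}(x_{i-1}^j)$ (using $f_{\pi_{j,i}}(x^*)=0$) and the exact identity $\|\nabla f_{\pi_{j,i}}(x_{i-1}^j)\|_2^2=2L_{\pi_{j,i}}f_{\pi_{j,i}}(x_{i-1}^j)$. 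This collapses the cross and quadratic terms into $-2\gamma\sum_i(1-\gamma L_{\pi_{j,i}})f_{\pi_{j,i}}(x_{i-1}^j)$ with $1-\gamma L_{\pi_{j,i}}\ge\frac12$, a genuine decrease, but phrased in terms of the \emph{inner} values $f_{\pi_{j,i}}(x_{i-1}^j)$ rather than $f$ at a single anchor. Replacing these by their tangent lower bounds at $x^j$ produces the desired $m\,f(x^j)$ plus a drift term built from the partial sums $x_{i-1}^j-x^j=-\gamma\sum_{\ell<i}\nabla f_{\pi_{j,\ell}}(x_{\ell-1}^j)$; this drift, together with the leftover $\gamma L_{\pi_{j,i}}$ weights, is what I would collect into $R_j$.

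The main obstacle is controlling $R_j$, and this is exactly where the without-replacement (reshuffling) structure must be used. The drift is a sign-indefinite double sum of gradient inner products of order $\gamma$, so after the leading $2\gamma$ factor it enters at order $\gamma^2$; to handle it I would bound the per-epoch iterate drift $\mathbb{E}[\|x_i^j-x^j\|_2^2\mid x^j]$ and invoke the sampling-without-replacement variance estimate for RR, whose driving quantity is the gradient dispersion $\frac1m\sum_i\|\nabla f_i(\cdot)\|_2^2$ anchored at $x^*$, i.e.\ $\sigma_*^2$. The calibration $\gamma\le\frac{1}{\sqrt2\,m\|A\|_2^2}$ (the factor $\sqrt2$ in particular) is tuned so that the resulting higher-order contributions are dominated and $R_j$ reduces to a nonnegative multiple of $\sigma_*^2$; interpolation then kills it, $\sigma_*^2=0$, leaving the clean descent $\mathbb{E}[\|x^{j+1}-x^*\|_2^2\mid x^j]\le\|x^j-x^*\|_2^2-2\gamma m\,f(x^j)$. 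Telescoping this and applying the Jensen reduction completes the proof. I expect the delicate points to be the precise form of the drift/variance bound and verifying that the coefficient in front of $f(x^j)$ survives as $2\gamma m$ (rather than being halved when absorbing $R_j$) under the stated step-size.
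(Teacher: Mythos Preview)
The paper does not prove this theorem at all: it is merely a restatement of Theorem~3 from \cite{mishchenko2020random}, included in Section~\ref{section-3.1} solely for comparison with the authors' own Corollary~\ref{main-rest}. There is therefore no ``paper's own proof'' to compare your proposal against.

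That said, your sketch is a faithful reconstruction of the convex-case argument in \cite{mishchenko2020random}: expand the epoch, use convexity and the exact smoothness identity for quadratics to get a per-epoch descent inequality, control the drift via the without-replacement variance bound (which is driven by $\sigma_*^2$), observe that consistency of $Ax=b$ forces $\sigma_*^2=0$ so the remainder vanishes, telescope, and finish with Jensen. The structural observations you highlight---interpolation kills the variance term, and the step-size calibration $\gamma\le\frac{1}{\sqrt{2}m\|A\|_2^2}$ is what makes the drift absorbable---are exactly the right ones. The off-by-one between the telescoped indices $x^0,\ldots,x^{k-1}$ and the averaging window $x^1,\ldots,x^k$ that you flag is indeed a minor bookkeeping issue in the original as well.
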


Theorem \ref{THM-Strongly} shows that the RR method achieves linear convergence in expectation, and converges to the unique solution \(A^\dagger b\) of the linear system \(Ax=b\), when the coefficient matrix \(A\) is column full rank. Nevertheless, when the coefficient matrix \(A\) is not full rank, Theorem \ref{THM-smooth} only assures sub-linear convergence for the RR method, guaranteeing that the average iterate \(\hat{x}^k\) converges to an unknown solution of the linear system \(Ax=b\). However, Corollary \ref{main-rest} demonstrates that our linear convergence result is applicable to both full rank and rank-deficient coefficient matrices, with a tight convergence upper bound. Given an appropriate initial point, convergence to the unique least-norm solution can also be guaranteed.
In addition, the step-size for the RR method has to be constant, while the RRK method on the other hand, can adopt a dynamic step size   $\lambda_{k,i}=1/\|a_{\pi_{k,i}}\|^2_2$, which can be much larger than \( 1/\sqrt{2} m \|A\|^2_2 \). And a larger step-size usually implies higher computational efficiency. % Finally, we note that Theorems 2 and 3 in \cite{mishchenko2020random} were established for both strongly convex and convex problems. Here,  we restate them for in the context of least squares problems in order to provide.
%we focus specifically on the least squares setting, which provides a more specialized and refined framework compared to the general case.

\subsection{Comparison of  RRK, SOK, IK, and RK}
\label{sec:3.3}

In this section, we compare the convergence upper bounds of  RK, RRK, SOK, and IK. In particular, we will present examples to illustrate their respective upper bounds.

We have previously established the convergence upper bound for RRK, denoted as $\rho_{RRK}$, in \eqref{def-rho}. Next, we briefly describe the convergence upper bounds for SOK, IK, and RK, respectively. 	 
Let the indices \( \pi_{1}, \pi_{2}, \ldots, \pi_{m} \) be sampled without replacement from \([m]\),  we set \( \pi_{SO} = (\pi_{1}, \pi_{2}, \ldots, \pi_{m}) \). It follows from Theorem  \ref{main-THM} that the SOK method with the random permutation \( \pi_{SO} \) exhibits the following convergence result
\[
\|x^{k}-x^*_0\|_2 \leqslant \rho_{SOK}^k  \|x^{0}-x^*_0\|_2,
\]
where $$\rho_{SOK}:=\|T_{\pi_{SO}}AA^\dagger\|_2.$$ Similarly, Theorem \ref{main-THM} shows that the IK method (\( \pi_{IK} = (1, 2, \ldots, m) \)) exhibits the following convergence result
\[
\|x^{k}-x^*_0\|_2 \leqslant \rho_{IK}^k  \|x^{0}-x^*_0\|_2,
\]
where $$\rho_{IK}:=\|T_{\pi_{IK}}AA^\dagger\|_2.$$
It has been proven \cite{Str09} that the RK method \eqref{RK-iteration} exhibits the following convergence result
$$
\mathop{\mathbb{E}} \left[\| x^{k}-x^*_0\|_2 \right] \leqslant \rho_{RK}^{k} \left\| x^0-x^*_0 \right\|_2,
$$
where $$\rho_{RK}:=\sqrt{1-\frac{\sigma_{\min}^{2} (A)}{\Vert A \Vert_F^2 }}.$$
Since the computational costs of the RRK method, the SOK method and the IK method at each epoch is about $m$-times as expensive as that of the RK method, we will account for this difference by considering $\rho^m_{RK}=\left(1-\frac{\sigma_{\min}^{2} (A)}{\Vert A \Vert_F^2 }\right)^{\frac{m}{2}}$ for the RK method.

By the definition of \(\rho_{RRK}\), we know that it represents the maximum value among all possible perturbations. Clearly, 
$$\rho_{RRK} \geqslant \rho_{IK} \ \ \text{and}\ \ \rho_{RRK} \geqslant \rho_{SO}.$$
However, if we consider only the convergence behavior within a single epoch, the RRK method may achieve a tighter convergence upper bound. The following example illustrates this point.

\begin{example}
	Consider the following coefficient matrix
	$$
	A=\begin{bmatrix}
		6 \ & \  4\\
		10  \ &  \ 4\\
		5 \  & \  8
	\end{bmatrix}.
	$$
	We have \(\|T_{(1,2,3)}\|_2 = \|T_{(3,2,1)}\|_2 \approx 0.7897\), \(\|T_{(3,1,2)}\|_2 = \|T_{(2,1,3)}\|_2 \approx 0.8918\), \(\|T_{(2,3,1)}\|_2 = \|T_{(1,3,2)}\|_2 \approx 0.7355\), and \(\rho_{RK}^3 \approx 0.8881\). It is evident that the convergence upper bounds of RRK, SOK, and IK are consistently better than that of RK.
	Furthermore, within a single epoch, the RRK method achieves the tightest convergence upper bound of $0.7355$ with  a probability of $1/3$.
\end{example}

The example above is artificially  constructed to illustrate the comparison of convergence upper bounds for RRK, SOK, IK, and RK. Below, we further compare these methods using real-world datasets.
%We define the relative solution error (RSE) as
%$$\text{RSE}=\frac{\|x^k-A^\dagger b\|^2_2}{\|x^0-A^\dagger b\|^2_2}.$$ 

% Here, the actual convergence factor is defined as
%$$\rho = \left(\frac{\|x^K-A^\dagger b\|^2_2}{\|x^0-A^\dagger b\|^2_2}\right)^{1/K},$$
%where $K$ is the number of iterations when the algorithm terminates.

\begin{example}
	The real-world data are obtained from the SuiteSparse Matrix Collection \cite{Kol19}.
	Each dataset includes  a matrix $A\in\mathbb{R}^{m\times n}$ and a vector $b\in\mathbb{R}^m$. In our experiments, we only use the matrices $A$ of the datasets and ignore the vector $b$. Specifically, we first generate the true solution $x^*={\tt randn(n,1)}$, and then compute $b=Ax^*$. All computations are initialized with $x^0=0$.  For each experiment, we run $20$ independent trials.

	Figure \ref{figureR1} illustrates the evolution of the relative solution error (RSE), defined as
	\[
	\text{RSE} = \frac{\|x^k - A^\dagger b\|_2^2}{\|x^0 - A^\dagger b\|_2^2},
	\]
	over the number of epochs for RRK, SOK, IK, and RK, and the worst-case convergence bounds derived from \(\rho_{IK}\) (Upper bound-IK) and \(\rho_{RK}\) (Upper bound-RK). Note that the worst-case convergence bounds derived from \(\rho_{RRK}\) and \(\rho_{SOK}\) are not plotted due to the computational impracticality of obtaining them. It can be seen that RRK and SOK are competitive compared to the other methods.
	Furthermore, the IK method performs the least effective, demonstrating the notable improvements in the Kaczmarz method brought by the randomization technique.
	%Evidently, both RABK and AmRABK converge significantly faster than the worst-case convergence suggested by the upper bound.
	% The four matrices used in our experiments are {\tt ash958}, {\tt crew1}, {\tt mk10-b2}, and {\tt ch8\_8\_b1}.
	
	\begin{figure}[tbhp]
		\centering
		\begin{tabular}{cc}
			\includegraphics[width=0.4\linewidth]{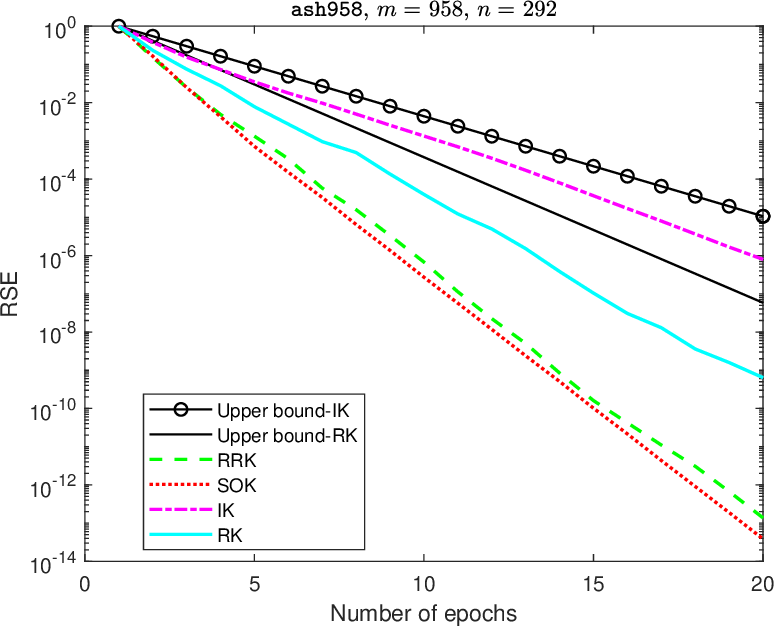}
			\includegraphics[width=0.4\linewidth]{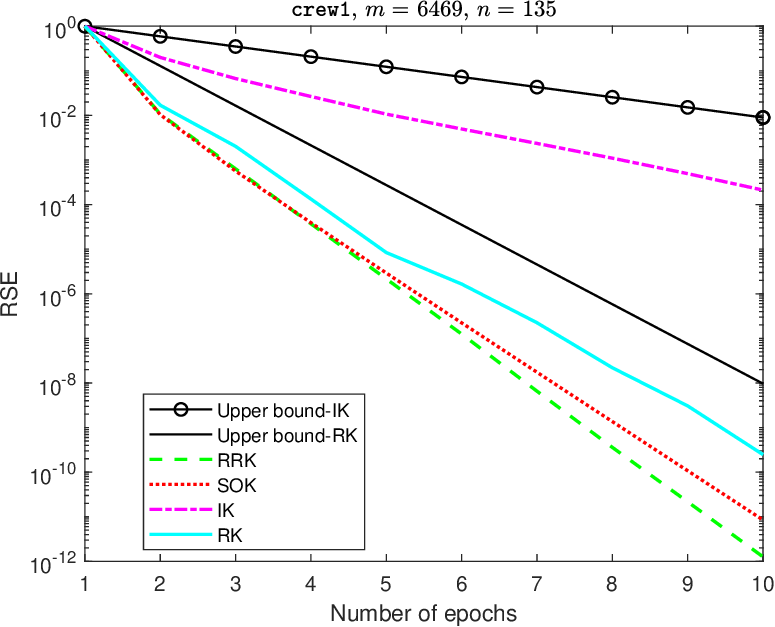}\\
			\includegraphics[width=0.4\linewidth]{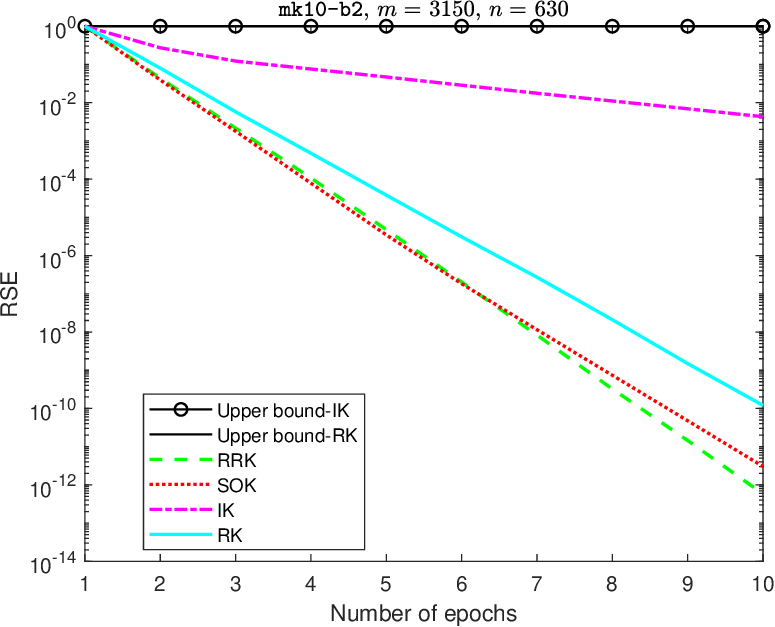}
			\includegraphics[width=0.4\linewidth]{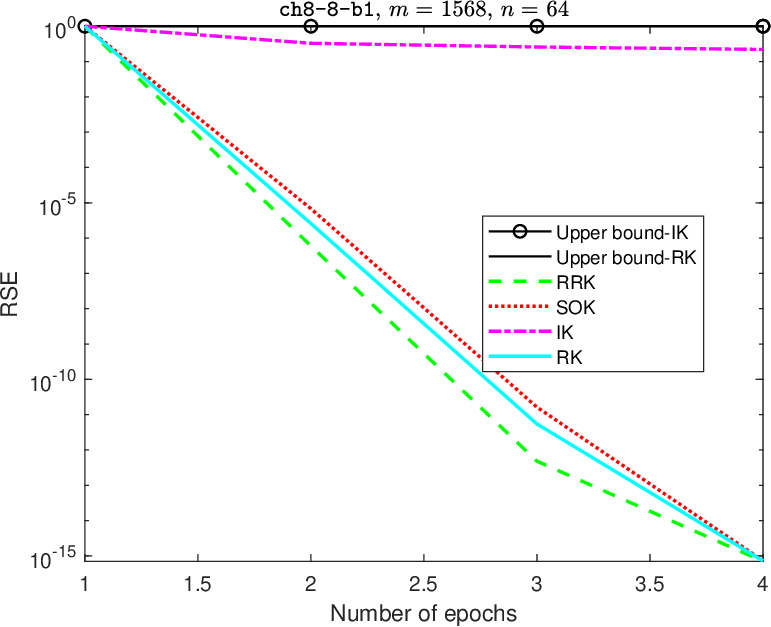}
		\end{tabular}
		\caption{The evolution of RSE with respect to the number of epochs. The title of each plot indicates the names and sizes of the data.}
		\label{figureR1}
	\end{figure}
	
\end{example}

\section{Concluding remarks}

We have established the linear convergence of the RRK method by analyzing the properties of the iteration matrices, and have shown that the convergence upper bound is tight. Moreover, our convergence analysis applies to both full rank and rank-deficient coefficient matrices.

Recent studies \cite{rieger2023generalized, hegland2023generalized} have shown that randomized Kaczmarz-type methods can be accelerated by the Gearhart-Koshy acceleration \cite{gearhart1989acceleration, tam2021gearhart}. They only proved that the resulting method converges to a certain solution of the linear system, without providing any convergence rate. The convergence analysis proposed in this paper could be beneficial for analyzing the Kaczmarz method with Gearhart-Koshy acceleration. Furthermore, the momentum acceleration technique is known for its effectiveness in improving optimization methods \cite{han2024randomized, loizou2020momentum, zeng2024adaptive}, it could be a valuable topic for exploring the momentum variant of the RRK method.

\bibliographystyle{abbrv}
%\bibliography{main0823}
\bibliography{main1230}

\begin{thebibliography}{10}

\bibitem{ahn2020sgd}
Kwangjun Ahn, Chulhee Yun, and Suvrit Sra.
\newblock {SGD} with shuffling: optimal rates without component convexity and
  large epoch requirements.
\newblock {\em Advances in Neural Information Processing Systems},
  33:17526--17535, 2020.

\bibitem{Bai18Gre}
Zhong-Zhi Bai and Wen-Ting Wu.
\newblock On greedy randomized {K}aczmarz method for solving large sparse
  linear systems.
\newblock {\em SIAM J. Sci. Comput.}, 40(1):A592--A606, 2018.

\bibitem{bauschke1997method}
Heinz~H Bauschke, Jonathan~M Borwein, and Adrian~S Lewis.
\newblock The method of cyclic projections for closed convex sets in hilbert
  space.
\newblock {\em Contemp. Math.}, 204:1--38, 1997.

\bibitem{bottou2009curiously}
L{\'e}on Bottou.
\newblock Curiously fast convergence of some stochastic gradient descent
  algorithms.
\newblock In {\em Proceedings of the symposium on learning and data science,
  Paris}, volume~8, pages 2624--2633. Citeseer, 2009.

\bibitem{Byr04}
Charles Byrne.
\newblock A unified treatment of some iterative algorithms in signal processing
  and image reconstruction.
\newblock {\em Inverse Problems}, 20(1):103--120, 2003.

\bibitem{cha2023tighter}
Jaeyoung Cha, Jaewook Lee, and Chulhee Yun.
\newblock Tighter lower bounds for shuffling {SGD}: {R}andom permutations and
  beyond.
\newblock In {\em International Conference on Machine Learning}, pages
  3855--3912. PMLR, 2023.

\bibitem{Cha08}
Kai-Wei Chang, Cho-Jui Hsieh, and Chih-Jen Lin.
\newblock Coordinate descent method for large-scale {L}2-loss linear support
  vector machines.
\newblock {\em J. Mach. Learn. Res.}, 9(7):1369–--1398, 2008.

\bibitem{chen2021regularized}
Xuemei Chen and Jing Qin.
\newblock Regularized {K}aczmarz algorithms for tensor recovery.
\newblock {\em SIAM J. Imaging Sci.}, 14(4):1439--1471, 2021.

\bibitem{feichtinger1992new}
Hans~Georg Feichtinger, C~Cenker, M~Mayer, H~Steier, and Thomas Strohmer.
\newblock New variants of the {POCS} method using affine subspaces of finite
  codimension with applications to irregular sampling.
\newblock In {\em Visual Communications and Image Processing'92}, volume 1818,
  pages 299--310. SPIE, 1992.

\bibitem{garrigos2023handbook}
Guillaume Garrigos and Robert~M Gower.
\newblock Handbook of convergence theorems for (stochastic) gradient methods.
\newblock {\em arXiv preprint arXiv:2301.11235}, 2023.

\bibitem{gearhart1989acceleration}
William~B Gearhart and Mathew Koshy.
\newblock Acceleration schemes for the method of alternating projections.
\newblock {\em J. Comput. Appl. Math.}, 26(3):235--249, 1989.

\bibitem{Gow19}
Robert~M Gower, Denali Molitor, Jacob Moorman, and Deanna Needell.
\newblock On adaptive sketch-and-project for solving linear systems.
\newblock {\em SIAM J. Matrix Anal. Appl.}, 42(2):954--989, 2021.

\bibitem{Gow15}
Robert~M. Gower and Peter Richtárik.
\newblock Randomized iterative methods for linear systems.
\newblock {\em SIAM J. Matrix Anal. Appl.}, 36(4):1660--1690, 2015.

\bibitem{gower2019sgd}
Robert~Mansel Gower, Nicolas Loizou, Xun Qian, Alibek Sailanbayev, Egor
  Shulgin, and Peter Richt{\'a}rik.
\newblock {SGD}: {G}eneral analysis and improved rates.
\newblock In {\em International conference on machine learning}, pages
  5200--5209. PMLR, 2019.

\bibitem{gurbuzbalaban2021random}
Mert G{\"u}rb{\"u}zbalaban, Asu Ozdaglar, and Pablo~A Parrilo.
\newblock Why random reshuffling beats stochastic gradient descent.
\newblock {\em Math. Program.}, 186:49--84, 2021.

\bibitem{han2024randomized}
Deren Han, Yansheng Su, and Jiaxin Xie.
\newblock Randomized {Douglas--Rachford} methods for linear systems: {I}mproved
  accuracy and efficiency.
\newblock {\em SIAM J. Optim.}, 34(1):1045--1070, 2024.

\bibitem{han2022pseudoinverse}
Deren Han and Jiaxin Xie.
\newblock On pseudoinverse-free randomized methods for linear systems:
  {U}nified framework and acceleration.
\newblock {\em arXiv preprint arXiv:2208.05437}, 2022.

\bibitem{haochen2019random}
Jeff Haochen and Suvrit Sra.
\newblock Random shuffling beats {SGD} after finite epochs.
\newblock In {\em International Conference on Machine Learning}, pages
  2624--2633. PMLR, 2019.

\bibitem{hegland2023generalized}
Markus Hegland and Janosch Rieger.
\newblock Generalized {Gearhart-Koshy} acceleration is a {Krylov} space method
  of a new type.
\newblock {\em arXiv preprint arXiv:2311.18305}, 2023.

\bibitem{herman1993algebraic}
Gabor~T Herman and Lorraine~B Meyer.
\newblock Algebraic reconstruction techniques can be made computationally
  efficient (positron emission tomography application).
\newblock {\em IEEE Trans. Medical Imaging}, 12(3):600--609, 1993.

\bibitem{Kol19}
Scott~P Kolodziej, Mohsen Aznaveh, Matthew Bullock, Jarrett David, Timothy~A
  Davis, Matthew Henderson, Yifan Hu, and Read Sandstrom.
\newblock The suitesparse matrix collection website interface.
\newblock {\em J. Open Source Softw.}, 4(35):1244, 2019.

\bibitem{liu2016accelerated}
Ji~Liu and Stephen Wright.
\newblock An accelerated randomized {K}aczmarz algorithm.
\newblock {\em Math. Comp.}, 85(297):153--178, 2016.

\bibitem{loizou2020momentum}
Nicolas Loizou and Peter Richt{\'a}rik.
\newblock Momentum and stochastic momentum for stochastic gradient, newton,
  proximal point and subspace descent methods.
\newblock {\em Comput. Optim. Appl.}, 77(3):653--710, 2020.

\bibitem{mishchenko2020random}
Konstantin Mishchenko, Ahmed Khaled, and Peter Richt{\'a}rik.
\newblock Random reshuffling: {S}imple analysis with vast improvements.
\newblock {\em Advances in Neural Information Processing Systems},
  33:17309--17320, 2020.

\bibitem{moorman2021randomized}
Jacob~D Moorman, Thomas~K Tu, Denali Molitor, and Deanna Needell.
\newblock Randomized {K}aczmarz with averaging.
\newblock {\em BIT.}, 61(1):337--359, 2021.

\bibitem{natterer2001mathematics}
Frank Natterer.
\newblock {\em The mathematics of computerized tomography}.
\newblock SIAM, 2001.

\bibitem{Nec19}
Ion Necoara.
\newblock Faster randomized block {K}aczmarz algorithms.
\newblock {\em SIAM J. Matrix Anal. Appl.}, 40(4):1425--1452, 2019.

\bibitem{needell2014stochasticMP}
Deanna Needell, Nathan Srebro, and Rachel Ward.
\newblock Stochastic gradient descent, weighted sampling, and the randomized
  {K}aczmarz algorithm.
\newblock {\em Math. Program.}, 155:549--573, 2016.

\bibitem{needell2014paved}
Deanna Needell and Joel~A Tropp.
\newblock Paved with good intentions: analysis of a randomized block kaczmarz
  method.
\newblock {\em Linear Algebra Appl.}, 441:199--221, 2014.

\bibitem{nguyen2021unified}
Lam~M Nguyen, Quoc Tran-Dinh, Dzung~T Phan, Phuong~Ha Nguyen, and Marten
  Van~Dijk.
\newblock A unified convergence analysis for shuffling-type gradient methods.
\newblock {\em J. Mach. Learn. Res.}, 22(207):1--44, 2021.

\bibitem{Ols14}
Maxim~A Olshanskii and Eugene~E Tyrtyshnikov.
\newblock {\em Iterative methods for linear systems: theory and applications}.
\newblock SIAM, Philadelphia, 2014.

\bibitem{Pat17}
Andrei Patrascu and Ion Necoara.
\newblock Nonasymptotic convergence of stochastic proximal point methods for
  constrained convex optimization.
\newblock {\em J. Mach. Learn. Res.}, 18(1):7204--7245, 2017.

\bibitem{rajput2020closing}
Shashank Rajput, Anant Gupta, and Dimitris Papailiopoulos.
\newblock Closing the convergence gap of {SGD} without replacement.
\newblock In {\em International Conference on Machine Learning}, pages
  7964--7973. PMLR, 2020.

\bibitem{recht2013parallel}
Benjamin Recht and Christopher R{\'e}.
\newblock Parallel stochastic gradient algorithms for large-scale matrix
  completion.
\newblock {\em Math. Program. Comput.}, 5(2):201--226, 2013.

\bibitem{rieger2023generalized}
Janosch Rieger.
\newblock Generalized {Gearhart-Koshy} acceleration for the kaczmarz method.
\newblock {\em Math. Comp.}, 92(341):1251--1272, 2023.

\bibitem{robbins1951stochastic}
Herbert Robbins and Sutton Monro.
\newblock A stochastic approximation method.
\newblock {\em Ann. Math. Statistics}, pages 400--407, 1951.

\bibitem{safran2020good}
Itay Safran and Ohad Shamir.
\newblock How good is {SGD} with random shuffling?
\newblock In {\em Conference on Learning Theory}, pages 3250--3284. PMLR, 2020.

\bibitem{schopfer2019linear}
Frank Sch{\"o}pfer and Dirk~A Lorenz.
\newblock Linear convergence of the randomized sparse {K}aczmarz method.
\newblock {\em Math. Program.}, 173(1):509--536, 2019.

\bibitem{Str09}
Thomas Strohmer and Roman Vershynin.
\newblock A randomized {K}aczmarz algorithm with exponential convergence.
\newblock {\em J. Fourier Anal. Appl.}, 15(2):262--278, 2009.

\bibitem{su2023linear}
Yansheng Su, Deren Han, Yun Zeng, and Jiaxin Xie.
\newblock On the convergence analysis of the greedy randomized {K}aczmarz
  method.
\newblock {\em arXiv preprint arXiv:2307.01988}, 2023.

\bibitem{sun2020optimization}
Ruo-Yu Sun.
\newblock Optimization for deep learning: An overview.
\newblock {\em J. Oper. Res. Soc. China}, 8(2):249--294, 2020.

\bibitem{tam2021gearhart}
Matthew~K Tam.
\newblock {G}earhart--{K}oshy acceleration for affine subspaces.
\newblock {\em Oper. Res. Lett.}, 49(2):157--163, 2021.

\bibitem{xie2024randomized}
Jiaxin Xie, Hou-Duo Qi, and Deren Han.
\newblock Randomized iterative methods for generalized absolute value
  equations: {S}olvability and error bounds.
\newblock {\em arXiv preprint arXiv:2405.04091}, 2024.

\bibitem{ying2018stochastic}
Bicheng Ying, Kun Yuan, Stefan Vlaski, and Ali~H Sayed.
\newblock Stochastic learning under random reshuffling with constant
  step-sizes.
\newblock {\em IEEE Trans. Signal Process.}, 67(2):474--489, 2018.

\bibitem{zeng2023fast}
Yun Zeng, Deren Han, Yansheng Su, and Jiaxin Xie.
\newblock Fast stochastic dual coordinate descent algorithms for linearly
  constrained convex optimization.
\newblock {\em arXiv preprint arXiv:2307.16702}, 2023.

\bibitem{zeng2024adaptive}
Yun Zeng, Deren Han, Yansheng Su, and Jiaxin Xie.
\newblock On adaptive stochastic heavy ball momentum for solving linear
  systems.
\newblock {\em SIAM J. Matrix Anal. Appl.}, 45(3):1259--1286, 2024.

\end{thebibliography}

\end{document}